\newcommand{\bbN}{\mathbb{N}}
\newcommand{\bbR}{\mathbb{R}}
\DeclareMathOperator{\re}{Re} 
\newcommand{\dx}{\;\mathrm{d}} 
\newcommand{\norm}[1]{\left\lVert #1 \right\rVert} 
\DeclareMathOperator{\dom}{dom} 
\newcommand{\spb}{s} 
\newcommand{\gbd}{\omega_0} 
\theoremstyle{definition}
\newtheorem{definition}{Definition}
\newtheorem{remark}[definition]{Remark}
\newtheorem{remarks}[definition]{Remarks}
\theoremstyle{plain}
\newtheorem{theorem}[definition]{Theorem}
\numberwithin{equation}{section}
\begin{document}

\title[Stability of (eventually) positive semigroups on $C_0(L)$]{Stability of (eventually) positive semigroups on spaces of continuous functions}
\author{Sahiba Arora}
\address{Sahiba Arora, Technische Universität Dresden, Institut für Analysis, Fakultät für Mathematik , 01062 Dresden, Germany}
\email{sahiba.arora@mailbox.tu-dresden.de}
\author{Jochen Gl\"uck}
\address{Jochen Gl\"uck, Universität Passau, Fakultät für Informatik und Mathematik, 94032 Passau, Germany}
\email{jochen.glueck@uni-passau.de}
\subjclass[2010]{47D06; 47B65; 47A10}
\keywords{Eventual positivity; long-term behaviour; stability; AM-spaces; positive operator semigroups}
\date{\today}
\begin{abstract}
	We present a new and very short proof of the fact that, for positive $C_0$-semigroups on spaces of continuous functions, the spectral and the growth bound coincide. Our argument, inspired by an idea of Vogt, makes the role of the underlying space completely transparent and also works if the space does not contain the constant functions -- a situation in which all earlier proofs become technically quite involved.
	
	We also show how the argument can be adapted to yield the same result for semigroups that are only eventually positive rather than positive.
\end{abstract}

\maketitle

\subsection*{Positive semigroups}

A classical question in the theory of positive $C_0$-semigroups is whether, on a function space $E$, the spectral bound 
\begin{align*}
	\spb(A) := \sup \{\re \lambda: \, \lambda \text{ is a spectral value of } A\} \in [-\infty,\infty)
\end{align*}
of  $(e^{tA})_{t \geq 0}$ coincides with its growth bound
\begin{align*}
	\gbd(A) := \inf \{\omega \in \bbR: \, (e^{t(A-\omega)})_{t \geq 0} \text{ is bounded}\} \in [-\infty, \infty).
\end{align*}
Recall that the inequality $\spb(A)\leq \gbd(A)$ is true for all $C_0$-semigroups, and the question for which semigroups equality holds is relevant in order to derive the long term behaviour of a semigroup from spectral properties of their generator.

The answer to the above question is positive on spaces of continuous functions, but the proofs that are known for this result (for instance, in \cite[Theorem~B-IV-1.4]{Nagel1986} or \cite[Theorems~5.3.8]{ArendtBattyHieberNeubrander2011}) tend to be technical in case that the space does not contain the constant functions (a situation which occurs most prominently for the space $C_0(L)$ of continuous functions that vanish at infinity, on a locally compact Hausdorff space $L$). Specifically, the proofs rely on the structure of the dual space $E'$ and the corresponding semigroup $(e^{tA'})_{t \geq 0}$ which, in general, is not $C_0$. Because of this reason, the proofs employ the so-called \emph{sun-dual} semigroup.
	
The first purpose of our paper is to provide a short and transparent proof of this classical result.

\begin{theorem}
	\label{thm:positive}
	Let $(e^{tA})_{t\geq 0}$ be a positive $C_0$-semigroup on an AM-space $E$. Then $\spb(A)=\gbd(A)$.
\end{theorem}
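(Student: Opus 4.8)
The plan is to reduce everything to a single, transparent estimate on the order unit. First I would note that it suffices to prove the implication $\spb(A)<0\Rightarrow(e^{tA})_{t\ge0}$ is bounded: applying this to the rescaled generators $A-\omega$ for every $\omega>\spb(A)$ (whose semigroups $e^{-\omega t}e^{tA}$ have spectral bound $\spb(A)-\omega<0$) yields $\gbd(A)\le\omega$, and letting $\omega\downarrow\spb(A)$ gives $\gbd(A)\le\spb(A)$; the reverse inequality is the general fact recalled above. By the uniform boundedness principle it is in turn enough to bound each orbit $t\mapsto e^{tA}x$ separately, and by splitting $x=x^+-x^-$ I may assume $x\ge0$. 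Since $\spb(A)<0$ forces $0\in\rho(A)$, the positive operator $R(0,A)=-A^{-1}=\int_0^\infty e^{tA}\dx t$ exists, and for $x\ge0$ the element $W:=R(0,A)x\ge0$ satisfies $e^{tA}W=\int_t^\infty e^{sA}x\dx s$, which is decreasing in $t$; in particular $0\le e^{tA}W\le W$, so the orbit of any such superharmonic element is bounded by $\norm{W}$.

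The heart of the argument, and where the AM-structure enters transparently, is the case in which $E$ possesses an order unit $\one$ (so $E\cong C(K)$ with $\one$ the constant function and $\norm{\,\cdot\,}$ the sup-norm). Here $\norm{e^{tA}}=\norm{e^{tA}\one}$ because a positive operator on a unital AM-space attains its norm at the unit. Strong continuity gives a $\delta>0$ with $\norm{e^{tA}\one-\one}<\tfrac12$ for $t\in[0,\delta]$, hence $e^{tA}\one\ge\tfrac12\one$ there, and therefore $W_{\one}:=R(0,A)\one\ge\int_0^\delta e^{tA}\one\dx t\ge\tfrac{\delta}{2}\one$. Combining the lower bound $\one\le\tfrac{2}{\delta}W_{\one}$ with the contraction estimate $e^{tA}W_{\one}\le W_{\one}$ yields $\norm{e^{tA}}=\norm{e^{tA}\one}\le\tfrac{2}{\delta}\norm{e^{tA}W_{\one}}\le\tfrac{2}{\delta}\norm{W_{\one}}$ for all $t\ge0$, so the semigroup is bounded. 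This uses nothing beyond positivity of $R(0,A)$, the decreasing orbit of $W_{\one}$, strong continuity, and the fact that the unit is bounded below.

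The main obstacle is the genuinely more delicate case treated in the paper, where $E$ has no order unit (the situation of $C_0(L)$ with $L$ non-compact). The unital argument breaks down at exactly one point: to dominate the orbit I needed a superharmonic element together with a lower bound $x\le\lambda W$, and while $W=R(0,A)x$ is always superharmonic, the domination $x\le\lambda W$ can fail, because no nonzero element of $C_0(L)$ is bounded below by a multiple of itself --- the mismatch occurs at infinity, which is precisely why the classical proofs are forced onto the non-$C_0$ sun-dual. My plan to circumvent this, keeping everything on $E$, is to localize: for $x\ge0$ choose a superharmonic majorant $v\ge0$ with $x\le\lambda v$ and pass to the principal ideal $E_v$, which under the associated gauge norm $\norm{\,\cdot\,}_v$ is an AM-space with unit $v$; since $e^{tA}v\le v$, the semigroup restricts to a $\norm{\,\cdot\,}_v$-contraction on $E_v$, reducing the estimate to the unital case already handled. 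I expect the crux --- and the place where the idea of Vogt makes the role of the space transparent --- to be the construction of such a superharmonic majorant for every $x$ (equivalently, showing that the relevant suprema of orbits stay inside $E$), which would replace the sun-dual machinery of the earlier proofs.
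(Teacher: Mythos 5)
Your reduction to ``$\spb(A)<0$ implies boundedness of each orbit of a positive vector'' matches the paper, and your unital case is correct: on $C(K)$ the estimate $\norm{e^{tA}}=\norm{e^{tA}\one}$, the lower bound $R(0,A)\one\geq\tfrac{\delta}{2}\one$ from strong continuity, and the superharmonicity $e^{tA}R(0,A)\one\leq R(0,A)\one$ do yield boundedness. But the non-unital case --- which is the entire point of the theorem as the paper frames it --- is not proved. Your plan rests on producing, for each $0\le x\in E$, a superharmonic $v\ge 0$ with $x\le\lambda v$, and you explicitly leave this construction open. That is a genuine gap, not a routine detail: the natural candidate $v=R(0,A)x$ fails in general (for a weighted shift semigroup on $C_0(\bbR)$ and a compactly supported bump $x$, the ratio of $x$ to $R(0,A)x$ blows up at the edge of the support), and it is not clear that any superharmonic majorant of a multiple of $x$ exists in $C_0(L)$. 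So the argument, as written, proves the theorem only for unital AM-spaces. (A smaller point: even granting such a $v$, you would not want to invoke the unital case on the principal ideal $E_v$, since strong continuity need not survive the passage to the gauge norm; but there the direct estimate $0\le e^{tA}x\le\lambda e^{tA}v\le\lambda v$ already suffices, so this is repairable.)

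The paper's proof shows that no such majorant of $x$ itself is needed; this is precisely Vogt's trick. One takes $g:=\sup\{e^{sA}f: s\in[0,1]\}$, which exists in $E$ because the orbit segment is compact and every relatively compact subset of an AM-space has a supremum, and then for $t\ge 1$ averages over the moving window $I=[t-1,t]$:
\[
	0\le e^{tA}f=\int_I e^{tA}f\dx s\le\int_I e^{sA}g\dx s\le\int_0^\infty e^{sA}g\dx s=:\tilde g,
\]
the last integral converging since $\spb(A)<0$. The point is that one only needs to dominate $e^{tA}f$ for $t\ge 1$ by a single fixed element, not to bound $f$ from above by something superharmonic; the integration over a window of length one converts the pointwise domination $e^{(t-s)A}f\le g$ into a bound by $\tilde g$ without ever requiring a lower bound of the form $f\le\lambda v$. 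Replacing your unproven majorant construction by this averaging step (together with the AM-space supremum property you did not use) closes the gap and recovers the paper's argument.
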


We have used the following terminology in the theorem: an \emph{AM-space} $E$ is a Banach lattice with the additional property $\norm{\sup\{f, g\}} = \sup\{\norm{f}, \norm{g}\}$ for all $0 \le f,g \in E$. Typical spaces of continuous functions, such as $C(K)$ for a compact space $K$ or $C_0(L)$ for a locally compact space $L$, are AM-spaces.
For the proof of Theorem~\ref{thm:positive}, we only need that every non-empty relatively compact subset of an AM-space $E$ has a supremum in $E$ \cite[Proposition II.7.6]{Schaefer1974}.

\begin{proof}[Proof of Theorem~\ref{thm:positive}]
	By a rescaling argument, it suffices to show that $(e^{tA})_{t\geq 0}$ is bounded whenever $\spb(A) < 0$. Indeed, if this implication is true, then the rescaled semigroup $(e^{t(A-s)})_{t\geq 0}$ is bounded for all $s>\spb(A)$. Consequently, $\gbd(A)\leq s$ for all $s>\spb(A)$ which yields $\gbd(A)\leq \spb(A)$.
	
	So suppose that $\spb(A) < 0$ and let $0 \le f \in E$ be fixed. Since the positive cone spans $E$, we only need to show that the orbit of $f$ is bounded. Boundedness of the semigroup will then follow from the uniform boundedness theorem.
	
	By the property of AM-spaces mentioned before the proof, the compact set $\{e^{tA}f : t\in [0,1] \}$ has a supremum $g \ge 0$ in $E$. 
	
	Fix a time $t\geq 1$ and let $I= [t-1,t]$. 
	Then $e^{tA}f=e^{sA}e^{(t-s)A}f\leq e^{sA}g$ for all $s\in I$, thus
	\[
		0 \le e^{tA}f=\int_I e^{tA}f \dx s \leq \int_I e^{sA}g \dx s \leq \int_0^\infty e^{sA}g \dx s =: \tilde{g} \in E;
	\]
	here the last integral exists as an improper Riemann integral due to the positivity of the $C_0$-semigroup and the assumption $\spb(A)<0$ (see, for instance, \cite[Theorem~5.3.1 and Proposition~5.1.4]{ArendtBattyHieberNeubrander2011}). The result now follows using $\norm{e^{tA}f} \leq \norm{\tilde g}$ for all $t\geq 1$.
\end{proof}

The main idea that we used above is an adaptation of an argument of Vogt, who recently presented an intriguingly easy proof of $\spb(A) = \gbd(A)$ on $L^p$-spaces \cite{Vogt2021} (while earlier proofs due to Weis were much more involved \cite{Weis95,Weis98}).
For more historical details on the question whether $\spb(A) = \gbd(A)$ for positive semigroups, we refer for instance to \cite[p.\,389]{ArendtBattyHieberNeubrander2011}. Recently, some conditions were given in \cite[Section~4]{PrajapatiSinhaSrivastava2019} which ensure that the above equality holds for positive semigroups on non-commutative $L^p$-spaces.
For a deeper connection between this question and geometric properties of Banach spaces, see \cite[Section~5]{RozendaalVeraar2018}.

\begin{remark}
	\label{rem:order-bounded}
	The proof of Theorem~\ref{thm:positive} cannot directly be generalized to a larger class of Banach lattices since AM-spaces are the only Banach lattices in which every non-empty relatively compact set has a supremum 
	(see, for instance, \cite[p.\,275]{Wickstead1975}).
		
	However, the argument directly generalizes to ordered Banach spaces with closed, normal, and generating cones, under the assumption that every relatively compact subset is order bounded; 
	conditions for this latter property can be found in \cite[Theorem~1]{Wickstead1975}.
	For this reason, our argument also yields a new and simple proof of \cite[Theorem~4]{BattyDavies1983}. In particular, Theorem~\ref{thm:positive} is true on every ordered Banach space which contains an order unit and has normal cone.
\end{remark}

\begin{remarks}\label{rem:properties-used}
	Our proof of Theorem~\ref{thm:positive} uses two ingredients:
	\begin{enumerate}[(a)]
		\item 
		The fact that every relatively compact set in an AM-space has a supremum.
		
		For concrete functions spaces such as $C_0(L)$, where $L$ is locally compact, this is not difficult to show by employing the Arzelà--Ascoli theorem (and for general AM-spaces, the proof is essentially the same).
		For $C(K)$, where $K$ is compact, the situation is even simpler since every norm bounded set in $C(K)$ is order bounded, which -- as mentioned in Remark~\ref{rem:order-bounded} -- suffices for the proof of Theorem~\ref{thm:positive}.
		
		\item 
		The fact that, if $\spb(A) < 0$ and $g \in E$, then $\int_0^\infty e^{sA}g \dx s$ converges in $E$ as an improper Riemann integral. 
		
		This non-trivial result from the theory of positive semigroups is true on all Banach lattices (and even on ordered Banach spaces with a normal generating cone); see the above-quoted \cite[Theorem~5.3.1 and Proposition~5.1.4]{ArendtBattyHieberNeubrander2011}.
		Throughout the literature, proofs that $\spb(A) = \gbd(A)$ holds for all positive semigroups on particular classes of function spaces, require this result as an important ingredient. 
		The same is true for our proof; our main contribution is that we significantly simplify the rest of the argument, which is the only place where the specific structure of the space is used.
	\end{enumerate}
\end{remarks}

\subsection*{Eventually positive semigroups}

While the theory of positive semigroups can be considered a classical topic in analysis, many evolution equations have been recently discovered to only exhibit \emph{eventually positive behaviour} -- i.e., for positive initial values, the solution first changes sign but then becomes and stays positive. This gives rise to the following notion:

A $C_0$-semigroup $(e^{tA})_{t\geq 0}$ on a Banach lattice $E$ is said to be \emph{individually eventually positive} if for every positive initial datum $f\in E$, there exists $t_0\geq 0$ such that $e^{tA}f$ is also positive for all $t\geq t_0$. 
The semigroup is called \emph{uniformly eventually positive} if $t_0$ can be chosen to be independent of $f$.

Many examples of eventually positive semigroups occur in the study of concrete PDEs; see for instance \cite[Proposition~2 and Examples~6 and 7]{HusseinMugnolo2020}, \cite[Section~3]{AddonaGregorioRhandiTacelli2021}, \cite[Proposition~5.5]{BeckerGregorioMugnolo2021}, and \cite[Section~7]{DenkKunzePloss2021} for recently discovered examples.
The prevalence of eventually positive semigroups in concrete differential equations makes it a worthwhile goal to understand their behaviour at a general and theoretical level, an endeavour which started with the papers \cite{DanersGlueckKennedy2016a, DanersGlueckKennedy2016b};
general results about the spectrum and long-term behaviour of eventually positive semigroups have recently been obtained in \cite{AroraGlueck2021a}. 

In the rest of this article, we specifically study whether $\spb(A) = \gbd(A)$ in the eventually positive case. The property of positive $C_0$-semigroups mentioned in Remark~\ref{rem:properties-used}(b) surprisingly holds even if the semigroup is only individually eventually positive; see \cite[Proposition~7.1]{DanersGlueckKennedy2016a}. As a consequence, it was shown in \cite[Theorem~7.8]{DanersGlueckKennedy2016a} that on $L^1$- and $L^2$-spaces, as well as on $C(K)$, the property $\spb(A) = \gbd(A)$ is true for individually eventually positive semigroups. On $L^p$ for other values of $p$, the aforementioned argument of Vogt carries over to uniformly eventually positive semigroups \cite{Vogt2021}; the individually eventually positive case remains currently open on $L^p$ for $p \in (1, \infty) \setminus \{2\}$.
In the following, we settle the question for all AM-spaces (while the proof given for $C(K)$ in \cite[Theorem~7.8]{DanersGlueckKennedy2016a} does not work on spaces not containing an order unit). The following argument even works for individually eventually positive semigroups. 

\begin{theorem}
	\label{thm:ev-pos}
	Let $(e^{tA})_{t\geq 0}$ be an individually eventually positive $C_0$-semigroup on an AM-space $E$. Then $\spb(A)=\gbd(A)$.
\end{theorem}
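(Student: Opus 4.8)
The plan is to run the proof of Theorem~\ref{thm:positive} essentially verbatim, after first using eventual positivity to replace $f$ by a vector whose \emph{entire} forward orbit is positive. As there, a rescaling reduces the claim to the implication that $\spb(A)<0$ forces $(e^{tA})_{t\ge0}$ to be bounded, and since the positive cone spans $E$ and the uniform boundedness principle applies, it suffices to bound the orbit of an arbitrary fixed $0\le f\in E$.

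First I would invoke individual eventual positivity to choose $t_0\ge0$ with $e^{tA}f\ge0$ for all $t\ge t_0$. The orbit on $[0,t_0]$ is compact, hence bounded, so only the tail matters; writing $h:=e^{t_0A}f\ge0$ we have $e^{sA}h=e^{(s+t_0)A}f\ge0$ for \emph{every} $s\ge0$, and bounding the orbit of $f$ reduces to bounding the orbit of $h$. Now I would mimic Theorem~\ref{thm:positive}: put $g:=\sup\{e^{rA}h:r\in[0,1]\}\ge0$ (this supremum exists by the AM-space property), and aim to show, for all sufficiently large $t$ and all $s\in[t-1,t]$, the domination $e^{tA}h=e^{sA}e^{(t-s)A}h\le e^{sA}g$. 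Granting this, integrating over $s\in[t-1,t]$ and passing to $\int_{s_1}^\infty e^{sA}g\dx s=:\tilde g\in E$ (the improper Riemann integral converges by Remark~\ref{rem:properties-used}(b), which remains valid in the eventually positive case) would yield $0\le e^{tA}h\le\tilde g$ for large $t$, whence $\norm{e^{tA}h}\le\norm{\tilde g}$ by monotonicity of the lattice norm.

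Two positivity inputs are needed, of very different character. The harmless one is the extension of the integration limit to infinity: since $g\ge0$ is a \emph{fixed} vector, individual eventual positivity yields $s_1\ge0$ with $e^{sA}g\ge0$ for $s\ge s_1$, which lets me enlarge the interval to $[s_1,\infty)$ once $t\ge s_1+1$. The genuine difficulty -- and the step I expect to be the main obstacle -- is the domination $e^{sA}(g-e^{(t-s)A}h)\ge0$ for $s\in[t-1,t]$. In the positive case this is immediate because $e^{sA}$ is a positive operator; here $e^{sA}$ is never positive, the element $g-e^{(t-s)A}h\ge0$ varies with $s$ and $t$, and applying individual eventual positivity pointwise produces thresholds that need not be uniform. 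Equivalently, one must establish $e^{sA}g\ge\sup\{e^{uA}h:u\in[s,s+1]\}$ for all large $s$, i.e.\ upgrade individual to \emph{uniform} eventual positivity along the compact family $\{g-e^{rA}h:r\in[0,1]\}$. I would attack this by exploiting that these vectors arise from the positive orbit of $h$ -- so that $e^{sA}$ is automatically positive on each orbit vector $e^{rA}h$ -- combined with a compactness/semicontinuity argument, or by importing a suitable uniformity statement from the long-term theory of eventually positive semigroups developed in \cite{AroraGlueck2021a}; controlling this passage from pointwise to uniform positivity is where the real work lies.
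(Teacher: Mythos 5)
Your proposal reproduces the overall architecture of the paper's argument (rescale, fix $0\le f$, pass to the positive tail of the orbit, form the supremum $g$ over a unit time window, dominate $e^{tA}f$ by $e^{sA}g$, integrate), and you correctly isolate the one step that does not carry over from the positive case: the domination $e^{sA}\bigl(g-e^{rA}h\bigr)\ge 0$ requires a positivity threshold that is \emph{uniform} over the compact family $\{g-e^{rA}h : r\in[0,1]\}$, whereas individual eventual positivity only gives a threshold for each member separately. However, you leave exactly this step unresolved: you ``expect it to be the main obstacle'' and propose either a vague ``compactness/semicontinuity argument'' or importing a uniformity statement from \cite{AroraGlueck2021a}. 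Neither suggestion closes the gap. The observation that $e^{sA}e^{rA}h\ge 0$ for all $s$ is of no use, since $e^{sA}$ is not a positive operator and so positivity of $e^{sA}g$ and of $e^{sA}e^{rA}h$ says nothing about their difference. And no general ``individual implies uniform on compact sets'' principle is available -- indeed, the absence of such a principle is precisely why the individually eventually positive case on $L^p$, $p\in(1,\infty)\setminus\{2\}$, remains open. So as written the proposal is incomplete at its decisive point.

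The paper closes the gap with a Baire category argument, and -- importantly -- it does \emph{not} establish uniformity over the whole compact family, only over a subfamily, which turns out to suffice. Concretely: the set $P:=\{g-e^{tA}f : t\in[t_f,t_f+1]\}$ is a compact subset of the positive cone and is covered by the closed sets $P_n:=\{h\in P : e^{tA}h\ge 0 \text{ for all } t\ge n\}$; by Baire, some $P_N$ has non-empty interior in $P$, and pulling back along the continuous surjection $t\mapsto g-e^{tA}f$ produces a subinterval $[t_0,t_0+\ell]\subseteq[t_f,t_f+1]$ of parameters on which the threshold $N$ is uniform. One then integrates over an interval of length $\ell\le 1$ instead of length $1$ (with the prefactor $\tfrac1\ell$), which still yields $0\le e^{tA}f\le \tfrac1\ell\int_{t_1}^\infty e^{sA}g\dx s$ for all large $t$. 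If you add this Baire category step (and adjust the integration interval accordingly), your argument becomes the paper's proof.
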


	We point out that the argument given in \cite[Theorem~7.8]{DanersGlueckKennedy2016a} for the special case $E = C(K)$ assumes that the generator $A$ is real, while we make no such assumption. 
	Furthermore, as mentioned before Theorem~\ref{thm:positive}, the proofs known for positive semigroups on $C_0(L)$, for a locally compact Hausdorff space $L$, employ the sun-dual semigroup on the dual space $E'$. This works since the closure of the domain $\dom{A'}$ of the dual operator can be shown to be an ideal in $E'$ when the semigroup is positive. The proof does not carry over to eventually positive semigroups, and it is unclear whether the same ideal property is still true for them. 
	
	On the other hand, the proof that we presented for Theorem~\ref{thm:positive} can be modified for the eventually positive case, which is a major advantage of our approach. We are therefore able to provide an affirmative answer to a question posed after \cite[Theorem~7.8]{DanersGlueckKennedy2016a}.
	
\begin{proof}[Proof of Theorem~\ref{thm:ev-pos}]
	We adjust the proof of Theorem~\ref{thm:positive} to the present situation: Assume that $\spb(A)<0$ and fix a vector $0\leq f\in E$; again, it suffices to show that the orbit of $f$ is bounded. 
	
	We can find a time $t_f\geq 0$ such that $e^{tA}f\geq 0$ for all $t\geq t_f$, owing to the individual positivity assumption on the semigroup. As in the proof of the previous theorem, the supremum 
	$
		g:= \sup \{e^{tA} f : t \in [t_f,t_f+1]\}
	$
	exists, since $E$ is an AM-space. In addition, the vector $g$ is positive (being a supremum of positive vectors), so there exists $t_g\geq 0$ such that $e^{tA}g\geq 0$ for all $t\geq t_g$. 
	
	Note that the set $P:=\{g - e^{tA}f : t\in [t_f,t_f+1]\}$ is a compact subset of the positive cone in $E$, and due to the individual eventual positivity of the semigroup, it is covered by its closed subsets
	\[
		P_n:=\{h\in P: e^{tA}h\geq 0 \text{ for all }t\geq n\},
	\]
	where $n$ runs through $\bbN$. Hence, by the Baire category theorem, there exists $N\in\bbN$ such that $(P_N)^{\circ}$ is non-void; here, the interior is taken within the space $P$. 
	
	Let us now consider the map $\Phi : [t_f,t_f+1] \to P$ defined by $\Phi(t)= g- e^{tA}f$ for all $t\in [t_f,t_f+1]$. 
	It is both continuous and surjective, therefore the pre-image $J:= \Phi^{-1} \big((P_N)^\circ\big)$ is non-void and open in $[t_f,t_f+1]$. Consequently, there exist $t_0\geq 0$ and $0 < \ell \leq 1$ such that $[t_0,t_0+\ell] \subseteq J\subseteq [t_f,t_f+1]$.
	Letting $t_1=\max\{N,t_g\}$, we conclude that the vectors $e^{sA}\big(g- e^{tA}f\big)$ are positive for all $s\geq t_1$ and all $t\in [t_0,t_0+\ell]$. 
	
	Finally, fix $t\geq t_0+t_1+\ell$ and let $I:=[t-t_0-\ell,t-t_0] \subseteq [t_1,\infty)$. With this notation, we have $e^{tA}f=e^{sA}e^{(t-s)A}f\leq e^{sA}g$ for all $s\in I$, and so
	\[
		0 \leq e^{tA}f = \frac1\ell \int_I e^{tA}f\, ds \leq \frac1\ell \int_I e^{sA}g\, ds \leq \frac1\ell \int_{t_1}^\infty e^{sA}g\, ds =: \tilde g \in E;		
	\]
	here we used that the integral on the right exists as an improper Riemann integral since $(e^{tA})_{t\geq 0}$ is individually eventually positive and $\spb(A)<0$ \cite[Proposition~7.1]{DanersGlueckKennedy2016a}; moreover, for the first inequality we used that $t \ge t_0 \ge t_f$, and for the last inequality we used $t_1 \ge t_g$.
	
	We conclude that $\norm{e^{tA}f} \leq \norm{\tilde g}$ for all $t \geq t_0+t_1+\ell$, which proves that the orbit of $f$ is bounded.
\end{proof}

\subsection*{Acknowledgements} 

The authors are indebted to Hendrik Vogt for various fruitful discussions and for sharing an early version of \cite{Vogt2021} with them.
This paper was written during a pleasant visit of the first author to Universität Passau.	
The first named author was supported by 
Deutscher Aka\-de\-mi\-scher Aus\-tausch\-dienst (Forschungs\-stipendium-Promotion in Deutschland).

\bibliographystyle{plainurl}
\bibliography{literature}

\end{document}